\title{The Gr\"{o}bner Basis of the\\ Ideal of Vanishing Polynomials}
\author{Gert-Martin Greuel\thanks{Department of Mathematics, Working Group {\it Algebra, Geometry, and Computer Algebra}, University of Kaiserslautern, Germany. Email: greuel@mathematik.uni-kl.de.}, Frank Seelisch\thanks{Email: seelisch@mathematik.uni-kl.de.}, Oliver Wienand}
\date{January 2009}
\newcommand{\ideal}[1]{{\left\langle{#1}\right\rangle}}
\newcommand{\ie}{i.\,e.\xspace}
\newcommand{\divz}{|_{\,\atop\!\Z}}
\newcommand{\divm}{|_{\,\atop\!\!\!\: m}}
\newcommand{\ndivz}{\nmid_{\,\atop\!\Z}}
\newcommand{\N}{\mathbb{N}}
\newcommand{\Z}{\mathbb{Z}}
\newcommand{\polyfct}[1]{\widetilde{#1}}
\newcommand{\vanideal}{\ensuremath{{I_0}}}
\newcommand{\vangb}[1][]{\ensuremath{{G_0}}}
\newcommand{\defemph}[1]{\textbf{#1}\index{#1}}
\newcommand{\degree}[1]{\deg\left({#1}\right)}
\newcommand{\skalar}[1]{\left\langle #1\right\rangle}
\newcommand{\card}[1]{{\left\lvert #1\right\rvert}}
\DeclareMathOperator{\lt}{LT} %
\DeclareMathOperator{\lc}{LC} %
\DeclareMathOperator{\lm}{LM} %
\DeclareMathOperator{\li}{L} %
\DeclareMathOperator{\gcod}{gcd} %
\newcommand{\wo}{\backslash} %
\newcommand{\LT}[1]{\lt\left( #1\right)} %
\newcommand{\LM}[1]{\lm\left( #1\right)} %
\newcommand{\LC}[1]{\lc\left( #1\right)} %
\newcommand{\LI}[1]{\li\left( #1\right)} %
\newcommand{\ggt}[1]{\gcod\left( #1\right)} %
\newcommand{\cR}{C}
\newcommand{\wR}{{\cR[\vars]}}
\newcommand{\vI}{\vanideal}
\newcommand{\vars}{\mathbf{x}}
\newcommand{\monom}{\mathbf{x}}
\newcommand{\Zmod}[1]{\Z/{#1}}
\newtheorem{thm}{Theorem}[section]
\newtheorem{prop}[thm]{Proposition}
\newtheorem{defn}[thm]{Definition}
\newtheorem{lem}[thm]{Lemma}
\newtheorem{cor}[thm]{Corollary}
\newtheorem{example}[thm]{Example}
\begin{document}

\maketitle

\begin{abstract}
We construct an explicit minimal strong Gr\"{o}bner basis of the ideal of vanishing polynomials in the polynomial ring over $\Zmod{m}$ for $m\geq 2$. The proof is done in a purely combinatorial way. It is a remarkable fact that the constructed Gr\"{o}bner basis is independent of the monomial order and that the set of leading terms of the constructed Gr\"{o}bner basis is unique, up to multiplication by units. We also present a fast algorithm to compute reduced normal forms, and furthermore, we give a recursive algorithm for building a Gr\"{o}bner basis in $\Zmod{m}[x_1,x_2,\ldots,x_n]$ along the prime factorization of $m$. The obtained results are not only of mathematical interest but have immediate applications in formal verification of data paths for microelectronic systems-on-chip.
\end{abstract}

\section{Introduction}
Although the basic properties of Gr\"{o}bner bases in polynomial rings over a ring $\cR$ are well-known (see \cite{adams}), they have not been studied very much, mainly because they were considered as academic, in contrast to the case where the ground ring $\cR$ is a field. Recently however, Gr\"{o}bner basis techniques in polynomial rings over $\cR=\Zmod{m}$ (in particular $\Zmod{2^k}$) have attracted some attention due to their potential applications to proving correctness of data paths in system-on-chip design (cf.~e.g.~\cite{newdevel},\cite{equi_check}, \cite{datacorr}).\\
When the underlying ring $\cR$ has only finitely many elements, then there exist polynomials in $\cR[x_1,x_2,\ldots,x_n]$ which evaluate to zero for all $(a_1,a_2,\ldots,a_n)\in\cR^n$, called vanishing polynomials. Thus, any polynomial function $\polyfct{f}:\cR^n\rightarrow\cR$ given by an arbitrary element $f\in\cR[x_1,x_2,\ldots,x_n]$, will have many alternative representations in $\cR[x_1,x_2,\ldots,x_n]$, as $\polyfct{f}=\polyfct{f+g}$, for all $g$ that constantly vanish on $\cR^n$. All vanishing polynomials constitute an ideal $I_0$.\\
In the applications mentioned above, not the polynomials but only the polynomial functions are of interest. Thus, if we want to apply algebraic methods we need to be able to efficiently compute normal forms of polynomials with respect to a Gr\"{o}bner basis of $I_0$. In the presented paper, we set the theoretical ground and provide fast algorithms for doing these computations.\\[1ex]
From a mathematical point of view, $I_0\subset\Zmod{m}[x_1,x_2,\ldots,x_n]$ has some interesting properties. In this paper, we will give an explicit minimal strong Gr\"{o}bner basis $G_m$ for $I_0$. As will turn out, $G_m$ is a Gr\"{o}bner basis with respect to {\it every} global monomial order. Moreover, we will show for any alternative minimal strong Gr\"{o}bner basis $G$ of $I_0\subset\Zmod{m}[x_1,x_2,\ldots,x_n]$ that the sets of leading terms of $G_m$ and $G$ are the same up to multiplication by units. This is
remarkable, since the ring $\Zmod{m}$ has zero divisors. In general, the leading terms of two minimal strong Gr\"{o}bner bases of an ideal $I\subset\cR[x_1, x_2,\ldots, x_n]$ need not be related by a unit but only by some element of $\cR$. We will prove both properties and show also that in general all minimal strong Gr\"{o}bner bases of an arbitrary ideal $I\subset\cR[x_1, x_2,\ldots, x_n]$ have the same number of elements.\\[1ex]
From a practical point of view, as mentioned above, engineering tasks involving the computation of Gr\"{o}bner bases over finite rings will often need to deal with vanishing polynomials. This is due to the fact that normally the elements of a Gr\"{o}bner basis $G$ will be used to decide the consistency of a mathematical model. And typically, such a check involves the question whether the set of zeros of all polynomials $f\in G$ coincides with the set of all feasible input-output vectors of the modelled artifact; see also \cite{newdevel}. Our interest was specifically spurred by a cooperation with the local Electronic Design Automation Group in which we use Gr\"{o}bner bases to formally verify chip designs. More precisely, a given verification task is translated into a polynomial ideal in~$\Zmod{2^k}$, where typically $k=32$ or $k=64$; cf.~\cite{ollisphd}.
For the special case of polynomial datapath verification we
also refer to \cite{datacorr} in which it was shown that the Gr\"{o}bner basis approach proves tractable for industrial applications where standard property checking techniques failed.\\[1ex]
This paper is organized as follows. Section 2 briefly recalls the basic concepts from the theory of polynomial rings and Gr\"{o}bner bases needed later. Section 3 starts by presenting canonical members of the ideal of vanishing polynomials $I_0\subset\Zmod{m}[x_1,x_2,\ldots,x_n]$. Next we show that the leading term of any given vanishing polynomial is divisible by the leading term of an appropriate canonical member. This relation enables us to finally construct an explicit minimal strong Gr\"{o}bner basis $G_m$ of $I_0\subset\Zmod{m}[x_1,x_2,\ldots,x_n]$. We also show that the size of $G_m$ is of polynomial order of degree $k$ in the number of variables $n$, when we are in the practically relevant case $m=2^k$.\\
The theoretical results are followed by algorithms for computing reduced normal forms with respect to the constructed basis, and for recursively computing a Gr\"{o}bner basis of $I_0\subset\Zmod{m}[x_1,x_2,\ldots,x_n]$ along the prime factorization of $m$. The normal form algorithm has been implemented in the computer algebra system {SINGULAR} \cite{singsys} and successfully applied, \cite{datacorr}.

\section{Preliminaries}

Let~$\cR$ be a commutative, noetherian ring with 1, and~$\wR:=\cR[x_1,x_2,\ldots,x_n]$ a multivariate polynomial ring over~$\cR$, where $n\geq 1$. For any multi-index\linebreak $\alpha=(\alpha_1, \ldots, \alpha_n)\in\{0,1,2,\ldots\}^n$, a product of variables~$\monom^\alpha:=x_1^{\alpha_1}\cdots x_n^{\alpha_n}$ is called a monomial, and a product~$a\cdot\monom^\alpha$ with~$a\in\cR$ is called a term.\\
Given two multi-indices $\alpha=(\alpha_1,\ldots,\alpha_n),\beta=(\beta_1,\ldots,\beta_n)$, we define $\alpha\pm\beta :=(\alpha_1\pm\beta_1,\ldots,\alpha_n\pm\beta_n)$. We may compare $\alpha$ and $\beta$ according to the predicate $\alpha\preceq\beta :\Leftrightarrow \forall i\in\{1,\ldots, n\}\: :\:\alpha_i\leq\beta_i$, and similarly $\alpha\prec\beta:\Leftrightarrow\alpha\preceq\beta \wedge\alpha\neq\beta$. For $\alpha=(\alpha_1, \ldots, \alpha_n)\in\{0,1,2,\ldots\}^n$, we write $\alpha!:=\alpha_1!\cdots\alpha_n!$, and $|\alpha|:=\alpha_1+\ldots +\alpha_n$.\\
Moreover, we require the polynomial ring $\wR$ to be equipped with a global monomial order $<$, \ie, $<$ is a well-order on the set of monomials and satisfies $\monom^\alpha > \monom^\beta \Rightarrow\monom^{\alpha+\gamma} > \monom^{\beta+\gamma}$ for all $\alpha,\beta,\gamma\in\{0,1,2,\ldots\}^n$. Then $<$ refines the partial order $\prec$.\\
Since we are going to work with divisibility in $\Zmod{m}[x_1,x_2,\ldots,x_n]$, we need to distinguish between divisibility in $\Zmod{m}$ and in $\Z$. We set $a\divz b:\Leftrightarrow \exists\:k\in\Z\: :$\linebreak $b=a\cdot k$ and $a\divm b:\Leftrightarrow \exists\:k\in\Z\: :\: m\divz (b-a\cdot k)$, that is, $b$ and $a\cdot k$ represent the same residue class in $\Zmod{m}$. For two monomials $a\vars^{\alpha},b\vars^{\beta}$, we say that $a\vars^{\alpha}$ divides $b\vars^{\beta}$, if $a\divm b\wedge \alpha\preceq\beta$. We then write $a\vars^{\alpha}| b\vars^{\beta}$, using the ordinary symbol.\\[1ex]
Let~$f=a_0\cdot \monom^{\alpha^{(0)}}+\dots+a_k\cdot \monom^{\alpha^{(k)}}$ be a polynomial in~$\cR[x_1,x_2,\ldots,x_n]$ with~$a_i\neq 0$ for $0\leq i\leq k,$ and~$x^{\alpha^{(0)}} > x^{\alpha^{(1)}} > \dots > x^{\alpha^{(k)}}$. We use the following notation:
\begin{align*}
\degree{f}&= \max\{\card{\alpha^{(i)}}~|~0\leq i\leq k\}&&\text{total degree of $f,$}\\
\LT{f} &= a_0\cdot \monom^{\alpha^{(0)}}&&\text{leading term of $f,$}\\
\LM{f} &= \monom^{\alpha^{(0)}}&&\text{leading monomial of $f,$}\\
\LC{f} &= a_0&&\text{leading coefficient of $f,$}\\
\LI{A} &= \skalar{\LT{f}~|~f\in A}_{\cR[x_1,x_2,\ldots,x_n]}&&\text{leading ideal of $A,$}\\
&&&\text{for $A$}\subset\cR[x_1,x_2,\ldots,x_n],\:A\neq\emptyset.
\end{align*}
For an ideal $I\subset\cR[x_1,x_2,\ldots,x_n]$ a finite set $G\subset\cR[x_1,x_2,\ldots,x_n]$ is called a \defemph{Gr\"{o}bner basis} of $I$ if
\[G\subset I\text{, and }\LI{I}=\LI{G}.\]
That is, $G$ is a Gr\"{o}bner basis, if the leading terms of $G$ generate the leading ideal of $I$. Note that in general, all 
defined objects depend on the chosen monomial order. Especially, a set $G$ may be a Gr\"{o}bner basis only with respect to a certain monomial order. We also remind the reader that with the given definition, $G$ already generates $I$, cf.~\cite{adams}.\\
$G$ is furthermore called a \defemph{strong Gr\"{o}bner basis} if for any $f\in I\wo\{0\}$ there
exists a polynomial $g\in G$ satisfying $\LT{g}|\LT{f}$. A strong Gr\"{o}bner basis $G$ is called \defemph{minimal strong} if~$\LT{g_1}\,\nmid\,\LT{g_2}$ for all distinct~$g_1, g_2\in G$. It is a well-known fact that a strong Gr\"{o}bner basis can always be constructed from a given Gr\"{o}bner basis when $\cR$ is a principal ideal domain, see e.g.~\cite{adams}.\\
Note that if~$\cR$ is a field, any non-zero coefficient of a term is invertible in $\cR$, and thus $\LI{A} = \skalar{\LM{f}~|~f\in A}$. It is easy to verify that in this case every Gr\"{o}bner basis is a strong Gr\"{o}bner basis. As the following example shows, this does in general not hold when $\cR$ is a ring:
\begin{example}
Consider $\cR := \Zmod{6}$, and the polynomial ring $\cR[x]$ with one variable. Then $G:=\{2x, 3x\}$ is a Gr\"{o}bner basis of the ideal $I:=\ideal{x}$.  But since neither $2x$ nor $3x$ divide $x,\:G$ is not a strong Gr\"{o}bner basis.
\end{example}

We shall now capture the central notions of this paper.

\begin{defn}
To any polynomial~$f\in\cR[x_1,x_2,\ldots,x_n]$ we associate the polynomial function~$\polyfct{f}:\cR^n\to\cR$, $(c_1,c_2,\ldots,c_n)\mapsto f(c_1,c_2,\ldots,c_n)$. We call~$f$ a \defemph{vanishing polynomial} if the function~$\polyfct{f}$ is identically zero.\\
The set $\vI=\{f\in\cR[x_1,x_2,\ldots,x_n]~|~f\text{ is a vanishing polynomial}\}$ is obviously an ideal in $\cR[x_1,x_2,\ldots,x_n]$, called the \defemph{ideal of vanishing polynomials}.
\end{defn}

\section{A Minimal Strong Gr\"{o}bner Basis of the\\
Ideal of Vanishing Polynomials}

\subsection{The Ideal of Vanishing Polynomials}
From now on let the coefficient ring be $\cR=\Zmod{m}$, where $m\geq 2$, except stated otherwise. The following results were inspired by the work of Singmaster \cite{singmaster}, Kempner \cite{kempner}, Halbeisen, Hungerb\"{u}hler, and L{\"a}uchli \cite{halbeisen}, and Hungerb\"{u}hler and Specker \cite{gen_smarand}. Already in Lemma 5 of \cite{kempner}, a univariate version of the following lemma was proven. Theorem 7 of \cite{halbeisen} restated this result, and \cite{gen_smarand} came up with a generalization to multivariate polynomial rings over $\Zmod{m}$.

\begin{lem}\label{lem_zeropoly}
Let~$a\in\Z$ and~$\alpha=(\alpha_1,\dots,\alpha_n)\in\N_0^n$ such that~$m\divz a\alpha!$. Then
\[p_{\alpha,a} := a\prod_{i=1}^n\prod_{l=1}^{\alpha_i}(x_i-l)\in \Zmod{m}[x_1,\dots,x_n]\]
is a vanishing polynomial.
\end{lem}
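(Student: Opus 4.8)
The plan is to show that the polynomial function $\widetilde{p_{\alpha,a}}$ vanishes at every point of $(\Zmod{m})^n$ by a direct evaluation argument, reducing everything to a single divisibility fact about factorials and consecutive integers. The key observation is that $p_{\alpha,a}$ factors as a product over the variables, $p_{\alpha,a} = a\cdot\prod_{i=1}^n q_i(x_i)$ with $q_i(x_i) = \prod_{l=1}^{\alpha_i}(x_i-l)$, so for any $(c_1,\dots,c_n)\in\Z^n$ representing a point of $(\Zmod{m})^n$ we get $p_{\alpha,a}(c_1,\dots,c_n) = a\cdot\prod_{i=1}^n\prod_{l=1}^{\alpha_i}(c_i - l)$ in $\Z$, and it suffices to prove that $m\divz a\cdot\prod_{i=1}^n\prod_{l=1}^{\alpha_i}(c_i-l)$.

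First I would prove the one-variable arithmetic lemma: for any $c\in\Z$ and any $\alpha_i\in\N_0$, the integer $\prod_{l=1}^{\alpha_i}(c-l) = (c-1)(c-2)\cdots(c-\alpha_i)$ is a product of $\alpha_i$ consecutive integers, hence divisible by $\alpha_i!$. The standard way to see this is to write $(c-1)(c-2)\cdots(c-\alpha_i) = \alpha_i!\cdot\binom{c-1}{\alpha_i}$ when $c-1\geq 0$ and to observe that the binomial coefficient $\binom{c-1}{\alpha_i}$ is an integer; for negative $c-1$ one uses $\binom{c-1}{\alpha_i} = (-1)^{\alpha_i}\binom{\alpha_i-c}{\alpha_i}$, which is again an integer, and for the degenerate range one notes that if $0\le c-1<\alpha_i$ the product contains the factor $0$. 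Either way, $\alpha_i!\divz\prod_{l=1}^{\alpha_i}(c-l)$.

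Next I would multiply these facts across $i=1,\dots,n$: since $\alpha_i!\divz\prod_{l=1}^{\alpha_i}(c_i-l)$ for each $i$, the product $\alpha! = \alpha_1!\cdots\alpha_n!$ divides $\prod_{i=1}^n\prod_{l=1}^{\alpha_i}(c_i-l)$ in $\Z$. Combining this with the hypothesis $m\divz a\alpha!$ gives $m\divz a\alpha!\divz a\cdot\prod_{i=1}^n\prod_{l=1}^{\alpha_i}(c_i-l) = p_{\alpha,a}(c_1,\dots,c_n)$, so the evaluation is $0$ in $\Zmod{m}$ for every choice of residues $\bar c_1,\dots,\bar c_n$. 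Since the point was arbitrary, $\widetilde{p_{\alpha,a}}\equiv 0$, i.e.\ $p_{\alpha,a}\in\vI$.

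The main obstacle is entirely contained in the one-variable step, and it is mild: one must handle the sign and the boundary cases of "$\alpha_i$ consecutive integers starting from $c-1$ going downward" cleanly, rather than assuming $c$ is large and positive. Once the divisibility $\alpha_i!\divz(c-1)\cdots(c-\alpha_i)$ is established uniformly in $c\in\Z$, the rest is a routine chain of divisibilities and the multivariate claim follows with no further work. (Note also the edge case $\alpha_i=0$, where the empty product is $1$ and $\alpha_i!=1$, so the statement holds trivially.)
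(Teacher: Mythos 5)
Your proposal is correct and follows essentially the same route as the paper: evaluate at an arbitrary point, observe that $\prod_{l=1}^{\alpha_i}(c_i-l)$ is a product of $\alpha_i$ consecutive integers and hence divisible by $\alpha_i!$, multiply over $i$, and invoke $m\divz a\alpha!$. Your binomial-coefficient justification of the one-variable divisibility (including the sign and boundary cases) is in fact more carefully argued than the paper's brief remark at that step.
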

\begin{proof}
Fix an arbitrary point $(c_1,c_2,\ldots,c_n)\in C^n$. Then $p_{\alpha,a}(c_1,c_2,\ldots,c_n)$ contains, for all $i$, by definition the $\alpha_i$ successive factors $c_i-1,c_i-2,\ldots,$\linebreak $c_i-\alpha_i$. Independent of the value of $c_i$, these contain all factors from $2$ up to $\alpha_i$. Therefore, $\alpha_i!$ divides $p_{\alpha,a}(c_1,c_2,\ldots,c_n)$, for all $i$. By combining these results, it follows immediately that $a\alpha_1!\cdots\alpha_n!$ divides $p_{\alpha,a}(c_1,c_2,\ldots,c_n)$. With $m\divz a\alpha!$ this yields $p_{\alpha,a}(c_1,c_2,\ldots,c_n)=0$ modulo $m$.
\end{proof}

Let us now take a closer look at an arbitrary vanishing polynomial:
\begin{lem}\label{lem_vanishpoly}
Let~$f\in I_0\subset \Zmod{m}[x_1,x_2,\ldots,x_n]$ be an arbitrary vanishing polynomial with $\LT{f}=b\vars^{\beta}$. Then $m\divz b\beta!$.
\end{lem}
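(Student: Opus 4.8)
My plan is to expand $f$ in the basis of shifted falling factorials and then strip off everything but the leading contribution by iterated finite differences. For $\gamma=(\gamma_1,\dots,\gamma_n)\in\N_0^n$ put $q_\gamma:=\prod_{i=1}^n\prod_{l=1}^{\gamma_i}(x_i-l)$, so that $q_\gamma=\monom^\gamma+(\text{terms }\monom^\delta\text{ with }\delta\prec\gamma)$, and since $<$ refines $\prec$ each such $\monom^\delta$ satisfies $\monom^\delta<\monom^\gamma$. Thus the transition matrix between $\{q_\gamma\}$ and the monomials is triangular with respect to $<$ and has $1$'s on the diagonal, hence is invertible over any ring; so $\{q_\gamma:\gamma\in\N_0^n\}$ is a basis of $\Zmod{m}[x_1,\dots,x_n]$ and $f$ has a unique expansion $f=\sum_\gamma c_\gamma q_\gamma$ with $c_\gamma\in\Zmod{m}$. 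From the triangularity I read off that $\LM{f}=\monom^\beta$ for $\beta=\LE{f}$, the $<$-largest index with $c_\beta\neq 0$, and that $\LC{f}=c_\beta$; so it suffices to prove $m\divz c_\beta\,\beta!$ for a fixed integer lift of $c_\beta$. (Note that $c_\gamma q_\gamma$ is exactly the polynomial $p_{\gamma,c_\gamma}$ of \lemref{lem_zeropoly}, which gives the converse implication for free, but that is not needed here.)

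Next I bring in the forward difference operator $\Delta_i g:=g|_{x_i\mapsto x_i+1}-g$ on $\Zmod{m}[x_1,\dots,x_n]$. The computational heart is the identity $\Delta_i q_\gamma=\gamma_i\,q_{\gamma-\e_i}$ (with $\e_i$ the $i$-th unit vector, and the right-hand side read as $0$ when $\gamma_i=0$), which follows at once by isolating the $x_i$-factors: $x_i(x_i-1)\cdots(x_i-\gamma_i+1)-(x_i-1)\cdots(x_i-\gamma_i)=\gamma_i\cdot(x_i-1)\cdots(x_i-\gamma_i+1)$. Iterating, and using that the $\Delta_i$ commute, $\Delta^\beta:=\Delta_1^{\beta_1}\cdots\Delta_n^{\beta_n}$ sends $q_\gamma$ to $\big(\prod_i\gamma_i!/(\gamma_i-\beta_i)!\big)\,q_{\gamma-\beta}$ when $\beta\preceq\gamma$ and to $0$ otherwise. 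Moreover $\Delta^\beta$ preserves the property of being a vanishing polynomial, since the pointwise difference of two functions that are identically $0$ on $(\Zmod{m})^n$ is again identically $0$.

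Now I apply $\Delta^\beta f$ with $\beta=\LE{f}$. For an index $\gamma$ with $c_\gamma\neq 0$ and $\gamma\succeq\beta$ one would have $\monom^\gamma\geq\monom^\beta$, so $\gamma=\beta$ by maximality of $\beta$; hence every term of $\Delta^\beta f$ dies except the one from $\gamma=\beta$, leaving the constant $\Delta^\beta f=\big(\prod_i\gamma_i!\big)\,c_\beta=c_\beta\,\beta!$. But $\Delta^\beta f$ is a vanishing polynomial and a constant polynomial in $\Zmod{m}[x_1,\dots,x_n]$ vanishes identically iff it is $0$ in $\Zmod{m}$, so $m\divz c_\beta\,\beta!$; since this divisibility is independent of the chosen integer representative of $\LC{f}$, this is exactly the assertion.

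The only place that needs care is the difference identity $\Delta_i q_\gamma=\gamma_i q_{\gamma-\e_i}$ together with the multi-index bookkeeping behind $\Delta^\beta q_\gamma$; both become elementary once the shifted-falling-factorial basis is in place, so choosing that basis rather than the monomial one is the key move. Everything else — the triangularity of the basis change, the stability of $I_0$ under the $\Delta_i$, and the final evaluation — is formal.
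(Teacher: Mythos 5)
Your proof is correct and is essentially the paper's argument: both hinge on applying the iterated partial difference operator $\nabla^\beta$ (your $\Delta^\beta$) to $f$, observing that it annihilates everything except the leading contribution and yields the constant $b\beta!$, which must vanish identically and hence be $0$ in $\Zmod{m}$. Your preliminary change to the shifted falling-factorial basis $\{q_\gamma\}$ is a clean but inessential repackaging of the paper's direct computation $\nabla_i^k x_i^k=k!$ and $\nabla_i^j x_i^k=0$ for $j>k$ on monomials.
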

For the proof we use some of the ideas introduced in \cite{gen_smarand}, which are based on the notion of partial differences in the multivariate setting. Already Carlitz used partial differences in the univariate case, see \cite{carlitz}, to give a necessary and sufficient condition for a function $f$ over $\Zmod{p^k}$ to be a polynomial function.\footnote{I.e., $f(a)=g(a)$ mod $p^k$, for all $a\in\Zmod{p^k}$ and some polynomial $g\in\Zmod{p^k}[x]$.}
\begin{proof}
Let $\cR[x_1,\ldots,x_n]$ denote an arbitrary polynomial ring over $n\geq 1$ variables, and let $h\in\wR$ be a polynomial. Then we may define the $i^{th}$ partial difference
\[ \nabla_i h:= h(x_1,\ldots,x_{i-1},x_i +1,x_{i+1},\ldots,x_n) - h(x_1,\ldots,x_{i-1},x_i,x_{i+1},\ldots,x_n), \]
for $1\leq i\leq n$. Note that $\nabla_i$ is a linear operator.\\
Now we can define the successive application of the operator by
\[ \nabla_i^0 h:= h,\:\:\:\mbox{and}\:\:\:\nabla_i^{k+1}h:=\nabla_i\nabla_i^k h,\:\:\mbox{for}\:k\geq 0.\]
(For n = 1, $\nabla_1^k h$ coincides with Carlitz' $\triangle^k h$; see \cite{carlitz}.)\\
Since obviously, $\nabla_i\nabla_j h = h(x_1,\ldots,x_i +1,\ldots,x_j +1,\ldots,x_n)-h(x_1,\ldots,$\linebreak $x_i +1,\ldots,x_n)-h(x_1,\ldots,x_j +1,\ldots,x_n)+h(x_1,\ldots,x_n) = \nabla_j\nabla_i h,$ for all $i,j\in\{1,\ldots,n\}$, we can extend the operator to arbitrary multi-indices, that is, with $\alpha=(\alpha_1,\ldots,\alpha_n)\in\{0,1,2,\ldots\}^n$, the term
\[ \nabla^{\alpha}h := \nabla_1^{\alpha_1}\nabla_2^{\alpha_2}\ldots\nabla_n^{\alpha_n} h \]
is independent from the order of application of the $\nabla_i$ operators and hence well-defined.\\
Let us consider the difference $(x_i+1)^k -x_i^k=k\cdot x_i^{k-1}+g(x_i)$, where $g$ consists of lower terms only, that is, $\degree{g}<k-1$. A simple induction shows that $\nabla_i^k x_i^k=k!$ and $\nabla_i^j x_i^k=0$, whenever $j>k$. Let now $a\vars^{\alpha}:=\LT{h}$ denote the leading term. Then, mainly due to the linearity of the $\nabla_i$ operators, it is easy to see that the previous facts can be further abstracted to the general statements
\[ \nabla^{\alpha} h = a\alpha!\:\:\:\mbox{and}\:\:\:\nabla^{\beta} h = 0,\:\:\mbox{for all}\:\:\beta\succ\alpha. \]
We apply the first equation to the vanishing polynomial $f$ over the ring $\Zmod{m}$: With $f$ also $\nabla^{\beta}f=b\beta!$ must be a vanishing polynomial, by construction. But this implies $b\beta!=0$ modulo $m$.
\end{proof}

\subsection{A Minimal Strong Gr\"{o}bner Basis of $I_0$}
The above lemmas suggest to consider the set of all polynomials $p_{\alpha,a}$ for which neither $\alpha$ nor $a$ can be replaced by a smaller multi-index or element of $\Zmod{m}$, respectively, without loosing the condition $m\divz a\alpha!$. (This minimality of $\alpha$ has been inspired by the so-called Smarandache function which maps $m$ to\linebreak $min\{k\in\N\;|\;m\divz k! \}$. This function played a role in previous works which studied the univariate case, and had been named after F. Smarandache, see \cite{smarandache}, although the idea had been introduced earlier by Kempner in Definition 1 of \cite{kempner}.)
We thus define
\begin{align*}
S_m & :=\{\:(\alpha, a) ~|~ 1\leq a<m,\:\:a\divz m,\:\:\alpha\in\N_0^n,\:\:m\divz a\alpha!,\\
    & \mbox{\hspace*{2cm}}\forall\:\beta\prec\alpha\: :\:\:m\ndivz a\beta!,\\
    & \mbox{\hspace*{2cm}}\forall\:b<a,\:b\divz a\: :\:\:m\ndivz b\alpha!\:\},\\
G_m &:= \{\:p_{\alpha,a} ~|~ (\alpha,a)\in S_m\:\}.
\end{align*}
Note that, according to Lemma 3.1, all polynomials in $G_m$ will still be elements of $I_0$. And by Lemma 3.2, we can hope to have constructed a strong Gr\"{o}bner basis. 

\begin{thm}
Let $m\geq 2$ and $n\geq 1$ be arbitrary integers. With the above notations, $G_m$ is a minimal strong Gr\"{o}bner basis of the ideal of vanishing polynomials $I_0\subset\Zmod{m}[x_1,x_2,\ldots,x_n]$, independent of the global monomial order.
\end{thm}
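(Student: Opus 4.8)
The plan is to prove the statement in three steps, after first pinning down the leading terms of the generators; since all the objects involved turn out to be order-independent, no genericity of the monomial order will be needed.

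\emph{Step 0 (leading terms).} I would first show that for every $(\alpha,a)\in S_m$ and every global monomial order one has $\LT{p_{\alpha,a}}=a\vars^{\alpha}$. Each factor $\prod_{l=1}^{\alpha_i}(x_i-l)$ is a monic univariate polynomial of degree $\alpha_i$, so expanding $\prod_{i=1}^{n}\prod_{l=1}^{\alpha_i}(x_i-l)$ produces only monomials $\vars^{\gamma}$ with $\gamma\preceq\alpha$, and $\vars^{\alpha}$ appears with coefficient $1$. Since every global order refines $\prec$, $\vars^{\alpha}$ is the leading monomial, and because $1\le a<m$ forces $a\neq 0$ in $\Zmod{m}$ we get $\LT{p_{\alpha,a}}=a\vars^{\alpha}$, independently of the order. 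Together with \lemref{lem_zeropoly} (hence $G_m\subset I_0$) and the routine finiteness of $S_m$, it then suffices to prove the strong divisibility property below: it immediately yields $\LI{I_0}\subseteq\LI{G_m}\subseteq\LI{I_0}$, and everything is order-independent.

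\emph{Step 1 ($G_m$ is a strong Gr\"obner basis).} Let $f\in I_0\wo\{0\}$ with $\LT{f}=b\vars^{\beta}$, choosing the representative $b\in\{1,\dots,m-1\}$; \lemref{lem_vanishpoly} gives $m\divz b\beta!$. Put $d:=\gcod(b,m)$ and write $b=db_1$, $m=dm_1$ with $\gcod(b_1,m_1)=1$; then $m_1\divz\beta!$, hence $m\divz d\beta!$. Now I would choose $\alpha\preceq\beta$ that is $\prec$-minimal among all $\alpha'\preceq\beta$ with $m\divz d\alpha'!$ (the set is nonempty, containing $\beta$, and $\prec$ admits no infinite descending chain on $\N_0^n$); by minimality $m\ndivz d\gamma!$ for every $\gamma\prec\alpha$. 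Then I would take $a$ to be the smallest positive divisor of $d$ with $m\divz a\alpha!$ (finitely many candidates, including $d$); then $a\divz d\divz m$, $1\le a<m$, and $m\ndivz b'\alpha!$ for every $b'\divz a$ with $b'<a$. Finally $a\divz d$ makes $m\divz a\gamma!$ imply $m\divz d\gamma!$, so $m\ndivz a\gamma!$ for all $\gamma\prec\alpha$ as well. Hence $(\alpha,a)\in S_m$; and since $\alpha\preceq\beta$ and $a\divz d\divz b$ (so $a\divm b$), we obtain $\LT{p_{\alpha,a}}=a\vars^{\alpha}|\,b\vars^{\beta}=\LT{f}$.

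\emph{Step 2 (minimality).} Suppose $(\alpha,a)\neq(\alpha',a')$ in $S_m$ satisfy $\LT{p_{\alpha,a}}|\LT{p_{\alpha',a'}}$, that is, $\alpha\preceq\alpha'$ and $a\divm a'$; since $a\divz m$, the latter is equivalent to $a\divz a'$ in $\Z$. If $\alpha=\alpha'$, then $a<a'$, and $a\divz a'$ with $m\divz a\alpha!=a\alpha'!$ contradicts the last defining condition of $(\alpha',a')\in S_m$ (taken at $b'=a$). If $\alpha\prec\alpha'$, then $m\divz a\alpha!$ and $a\divz a'$ give $m\divz a'\alpha!$, contradicting the $\prec$-minimality condition of $(\alpha',a')\in S_m$ at $\gamma=\alpha$. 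So no leading term of $G_m$ divides another, and $G_m$ is minimal strong.

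I expect \emph{Step 1} to be the main obstacle: it requires arranging all the conditions defining $S_m$ simultaneously, and the delicate point is that $\alpha$ is minimized against $d=\gcod(b,m)$ first, while the coefficient is only afterwards shrunk to $a\le d$, so one must verify that multi-index minimality survives the shrink --- which it does precisely because $a\divz d$ forces $m\divz a\gamma!\Rightarrow m\divz d\gamma!$. It is worth isolating the two elementary arithmetic facts used throughout: that $m\divz b\beta!$ implies $m\divz\gcod(b,m)\cdot\beta!$, and that $a\divz m$ makes $a\divm a'$ the same as $a\divz a'$.
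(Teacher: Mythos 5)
Your proposal is correct, and its overall architecture (finiteness, $G_m\subset I_0$ via \lemref{lem_zeropoly}, a strong divisibility property via \lemref{lem_vanishpoly}, then minimality from the defining conditions of $S_m$) matches the paper's. Two points where you genuinely diverge are worth recording. First, in Step~1 the paper merely asserts in one sentence that a leading term $b\vars^{\beta}$ with $m\divz b\beta!$ can be ``shrunk'' to some $(\alpha,a)\in S_m$ with $a\vars^{\alpha}\,|\,b\vars^{\beta}$; you actually construct it, and your ordering of the two minimizations (first pass to $d=\gcod{(b,m)}$ and minimize the multi-index against $d$, then shrink the coefficient among divisors of $d$) is exactly the care needed to make all four conditions of $S_m$ hold simultaneously while keeping $a\divz b$ --- this fills a real gap in the paper's exposition. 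Second, your minimality argument is a different and cleaner route: the paper shows $b\divz a$ by comparing prime factorizations and then extracts $a=b$ from the congruence $m\divz ac-b$, whereas you observe once that $a\divz m$ turns $a\divm a'$ into honest integer divisibility $a\divz a'$, after which both cases ($\alpha=\alpha'$ and $\alpha\prec\alpha'$) contradict a defining condition of $S_m$ directly. Your version avoids the prime-by-prime bookkeeping entirely and is the argument I would keep.
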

Before we prove the theorem, let us take a look at an example.
\begin{example}
Let $m=q_1\cdot q_2\cdots q_k$ be a product of $k\geq 1$ mutually distinct primes, and $n\geq 1$ arbitrary. We assume $q_1<q_2<\ldots<q_k$. Then
we can immediately write down all elements of $G_m$:
\begin{align*}
& (x_i-1)(x_i-2)\cdots(x_i-q_k),\\
q_k\cdot & (x_i-1)(x_i-2)\cdots(x_i-q_{k-1}),\\
q_k\cdot q_{k-1}\cdot & (x_i-1)(x_i-2)\cdots(x_i-q_{k-2}),\\
& \cdots\\
q_k\cdot q_{k-1}\cdots q_2\cdot & (x_i-1)(x_i-2)\cdots(x_i-q_1),
\end{align*}
in each row for all $i\in\{1,2,\ldots,n\}$.
\end{example}
Note that the first type of polynomial is in $G_m$, as $q_k!$ already contains all $q_j$, thus $m\divz q_k!$. Also, we need to have all $q_k$ polynomial factors since, for all $r<q_k,\:q_k\ndivz r!$, \ie $m\ndivz r!$. For the following polynomials, the argument is similar. Moreover, it is easy to see that we do not have elements in $G_m$ involving two or more variables, and the presented polynomials are all elements of $G_m$.\\
In this special case $|G_m|=k\cdot n$, and the maximal degree is $q_k$. This means that the size of the basis is only linear in the number of variables.\\
For the case $k=1$, $\Zmod{q_1}$ is a field, and we obtain only the $n$ polynomials in the top row, which are well-known for this case.\\
We now prove the theorem:
\begin{proof}
Let us fix $m\geq 2$, the number of variables $n\geq 1$, and an arbitrary global monomial order. We first show that $G_m$ is indeed a Gr\"{o}bner basis of $I_0$. To this end, it suffices to show that (i) $S_m$ and hence $G_m$ is a finite set, (ii) $G_m\subset I_0$, and (iii) $\LI{I_0}\subset\LI{G_m}$, since (ii) implies the other inclusion $\LI{G_m}\subset\LI{I_0}$.\\
(i) Since $(\alpha,a)\in S_m$ implies $\alpha\preceq (m, m,\ldots,m)$, the set is clearly finite.\\
(ii) $G_m$ consists of polynomials $p_{\alpha,a}$ with $m\divz a\alpha!$. Then $G_m\subset I_0$ by Lemma 3.1.\\
(iii) Let $f\in\LI{I_0}$ be arbitrary. Then there exist some integer $N\geq 1,$\linebreak $h_i\in\Zmod(m)[x_1,x_2,\ldots,x_n]$ and $f_i\in I_0,1\leq i\leq N,$ such that
\[ f = \sum_{i=1}^N h_i\cdot\LT{f_i}. \]
Writing $a_i\vars^{\alpha^{(i)}}:=\LT{f_i}$, we obtain $m\divz a_i\alpha^{(i)}!$ from Lemma 3.2. Now either $(\alpha^{(i)},a_i)$ is already an element of $S_m$. Or we can replace $a_i$ by some $b_i\divz a_i$ and/or $\alpha^{(i)}$ by some $\beta^{(i)}\preceq\alpha^{(i)}$ such that $(\beta^{(i)},b_i)\in S_m$. We can subsume both cases in saying that, for each $i\in\{1,2,\ldots,N\}$, there is some $(\beta^{(i)},b_i)\in S_m$ such that $b_i\vars^{\beta^{(i)}}|\LT{f_i}$. With appropriate polynomials $g_i, 1\leq i\leq N,$ this amounts to
\[ f = \sum_{i=1}^N h_i\cdot g_i\cdot\LT{p_{\beta^{(i)},b_i}}, \]
\ie, $f\in\LI{G_m}$.\\
Next, let $f\in I_0$. Then, with the same argument as for the $f_i$ above, there exists a $p_{\gamma, c}\in G_m$ such that $\LT{p_{\gamma, c}}|\LT{f}$. This shows that $G_m$ is a strong Gr\"{o}bner basis.\\
It remains to show that $G_m$ is minimal. To this end, pick two pairs $(\alpha,a), (\beta,b)\in S_m$ such that $a\vars^{\alpha}|b\vars^{\beta}$. Then $a\divm b,\:a\divz m,\:b\divz m,$ and $\alpha\preceq\beta$. We need to prove that $a=b$ and $\alpha=\beta$. Computing in $\Z$, take a prime factor $q$ of $b$ and $k\geq 1$ maximal such that $q^k\divz b$. Suppose $q^k\ndivz a$. Then $a\alpha!$ would have at least one less factor $q$ in its prime factorization than $b\alpha!$. But since $m\divz a\alpha!$, we then had $m\divz b/q\cdot\alpha!\divz b/q\cdot\beta!$, and $b$ would not be minimal in $(\beta,b)\in S_m$. We conclude that $b\divz a$. We write this as $a=d\cdot b$ for some $d\divz m$. Now $a\divm b$, that is, $m\divz a\cdot c-b$ for some $c$. Putting things together we get $bd=a\divz m\divz bcd - b = b(cd-1)$. Hence $d\divz (cd-1)$ which can only hold for $d=1$, implying $a=b$. But then we must also have $\alpha=\beta$, since otherwise $\beta$ would not be minimal in $(\beta,b)\in S_m$.
\end{proof}
We now show that leading terms of minimal strong Gr\"{o}bner bases of\linebreak $I_0\subset\Zmod{m}[x_1,x_2,\ldots,x_n]$ are unique, up to multiplication by units of $\Zmod{m}$. We prove this result as a consequence of a more general statement for ideals over arbitrary commutative rings with $1$ that has, to our knowledge, not been stated before. (Note the similar statement in the field case; see e.g.~Proposition 1.8.4 in \cite{adams}.)
\begin{thm}
a) Let $G, F$ be two minimal strong Gr\"{o}bner basis of an arbitrary ideal $I\subset\cR[x_1,x_2,\ldots,x_n]$, where $\cR$ is any commutative ring with $1$. Then $|G|=|F|$, and the sets of leading terms in $G$ and $F$ coincide up to multiplication by elements of $\cR$, \ie,
\[ \forall\:g\in G\:\:\:\exists\:f\in F\:\:\:\exists\:c\in\cR\:\:\:\LT{g} = c\cdot\LT{f}.\:\:\:\:(*) \]
b) In the case of $\cR=\Zmod{m}$ and $I=I_0$, the ring elements $c$ in $(*)$ can be chosen to be units of $\Zmod{m}$.
\end{thm}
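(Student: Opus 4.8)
The plan is to prove (a) by a ``back-and-forth'' argument between the two bases, and then to deduce (b) from (a) together with one elementary fact about $\Zmod m$.

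For (a), fix $g\in G$. Since $g\in I\setminus\{0\}$ and $F$ is a strong Gr\"{o}bner basis, there is $f\in F$ with $\LT{f}\mid\LT{g}$. Now apply the defining property of the strong Gr\"{o}bner basis $G$ to $f\in I\setminus\{0\}$: there is $g_1\in G$ with $\LT{g_1}\mid\LT{f}$, hence $\LT{g_1}\mid\LT{g}$, so minimality of $G$ forces $g_1=g$, and therefore also $\LT{g}\mid\LT{f}$. Thus $\LT g$ and $\LT f$ divide each other, which gives $\LM g=\LM f$ and $\LT g=c\cdot\LT f$ for some $c\in\cR$ — this is the relation $(*)$. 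The element $f$ is moreover unique: if $f'\in F$ also satisfies $\LT{f'}\mid\LT g$, then $\LT{f'}\mid\LT g\mid\LT f$, and minimality of $F$ gives $f'=f$. Hence $g\mapsto f$ defines a map $\varphi\colon G\to F$, and the same construction with the roles of $G$ and $F$ exchanged gives $\psi\colon F\to G$. Because each correspondence is pinned down by mutual divisibility of leading terms, $\psi\circ\varphi=\mathrm{id}_G$ and $\varphi\circ\psi=\mathrm{id}_F$ (for instance $\psi(\varphi(g))$ is the unique element of $G$ whose leading term divides $\LT{\varphi(g)}$, and $\LT g$ does, so it equals $g$). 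Hence $\varphi$ is a bijection, $|G|=|F|$, and $(*)$ holds with $f=\varphi(g)$.

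For (b), set $\cR=\Zmod m$ and keep this notation: for $g\in G$ and $f=\varphi(g)$, the leading coefficients $\LC g$ and $\LC f$ divide each other in $\Zmod m$, hence generate the same ideal; equivalently, fixing integer lifts, their $\gcd$ with $m$ is a common value $d$. It therefore suffices to prove the elementary fact that two elements $u,v\in\Zmod m$ whose lifts have the same $\gcd$ with $m$ satisfy $u=w\,v$ for some unit $w$. Writing these lifts as $d u_1$ and $d v_1$ with $u_1,v_1$ coprime to $m/d$, the condition $u=w\,v$ in $\Zmod m$ amounts to $w\equiv u_1 v_1^{-1}\pmod{m/d}$, a residue coprime to $m/d$; a short Chinese Remainder argument lifts it to a residue modulo $m$ coprime to $m$ (for a prime $q\mid m$ with $q\mid m/d$ the congruence already forces $q\nmid w$, while for a prime $q\mid m$ with $q\nmid m/d$ one chooses $w\equiv 1$ modulo the $q$-part of $m$). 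This $w$ is a unit, and it replaces $c$ in $(*)$. Alternatively one may take $F=G_m$ in (a), so that $f=p_{\alpha,a}$ with $\LT{p_{\alpha,a}}=a\vars^{\alpha}$ and $a\divz m$, whence $\gcd(a,m)=a$ and the same fact applies; note that, beyond what (a) supplies, this step uses only the ring-theoretic structure of $\Zmod m$ and not the special form of $I_0$.

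I expect the one genuinely delicate point to be in (a): verifying that the leading-term divisibility correspondence is single-valued \emph{in both directions}, so that $\varphi$ and $\psi$ are honest mutually inverse bijections rather than merely matchings — this is precisely where minimality of the two bases is invoked twice. Once that is secured, $|G|=|F|$ and $(*)$ are immediate, and the number-theoretic lemma behind (b) is routine, its only content being the observation that in the principal ideal ring $\Zmod m$ two elements generating the same ideal are automatically unit multiples of one another.
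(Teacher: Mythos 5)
Your part (a) is essentially the paper's own argument: the same double application of strongness (of $F$ to $g$, then of $G$ to the resulting $f$) combined with minimality to get mutual divisibility of $\LT{g}$ and $\LT{f}$, and the same observation that the correspondence is single-valued in both directions to yield $|G|=|F|$; the paper phrases the counting as a matching of leading-term sets rather than as explicit mutually inverse maps $\varphi,\psi$, but that is cosmetic. (One tacit assumption you share with the paper: elements of a strong Gr\"obner basis are nonzero, so that $\LT{f}$ is defined when you feed $f$ back into the strongness of $G$.)

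Part (b) is where you genuinely diverge, and your route is correct and in fact stronger. The paper takes $G=G_m$, writes $\LT{f}=a\vars^{\alpha}$, sets $\tilde a=\ggt{a,m}$, and then invokes Lemma~3.2 (namely $m\divz a\alpha!$ for the vanishing polynomial $f$) together with the minimality of $b$ in $(\beta,b)\in S_m$ to force $\tilde a=b$; finally it relates two arbitrary bases by transitivity through $G_m$. You instead observe that part (a) already gives mutual divisibility of the two leading coefficients in $\Zmod{m}$, i.e.\ they generate the same principal ideal, and then invoke the elementary fact that in $\Zmod{m}$ two generators of the same ideal differ by a unit. Your CRT verification of that fact is sound (one can also see it by splitting $\Zmod{m}$ into its local factors $\Zmod{q^e}$, where every element is a unit times a power of $q$). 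This bypasses Lemma~3.2 and the structure of $S_m$ entirely, and it proves the stronger statement that (b) holds for \emph{every} ideal $I\subset\Zmod{m}[x_1,\ldots,x_n]$, not just $I_0$ --- worth noting, since the paper's remark that the second statement holds for any ideal ``if the ring $\cR$ is a domain'' suggests the authors did not observe this. What the paper's approach buys in exchange is an explicit identification of the leading coefficient of any minimal strong basis element as a unit times the divisor $b$ of $m$ appearing in $S_m$; what yours buys is generality and independence from the combinatorics of $I_0$. Your closing ``alternatively take $F=G_m$'' aside is unnecessary and can be dropped.
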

Note that the second statement holds for any ideal, if the ring $\cR$ is a domain.
\begin{proof}
a) Starting with the proof of $(*)$, we pick any $g\in G\subset I$. Then, by strongness of $F$, there is some $f\in F$ such that $\LT{f}|\LT{g}$. Vice versa, by strongness of $G$, there must be some $g'\in G$ such that $\LT{g'}|\LT{f}$. Therefore, $\LT{g'}|\LT{f}|\LT{g}$, which implies $g = g'$, by minimality of $G$. But then the leading monomials $\LM{f}$ and $\LM{g}$ must also coincide, yielding the desired relation between $\LT{f}$ and $\LT{g}$.\\
Similar to the previous argument, it is easy to see that no two distinct leading terms in $F$ can fulfil a relation $(*)$ with the same leading term in $G$, and vice versa. This implies the equality $|\{\LT{g}~|~g\in G\}|=|\{\LT{f}~|~f\in F\}|$ which clearly amounts to $|G|=|F|$, by the minimality of $G$ and $F$.\\
b) We first choose $G=G_m$ to be the explicitely given Gr\"{o}bner basis, and $F$ any other minimal strong Gr\"{o}bner basis of $I_0\subset\Zmod{m}[x_1,x_2,\ldots,x_n]$. Consider a relation as in $(*)$, \ie, $b\cdot\vars^{\beta}=c\cdot a\cdot\vars^{\alpha}$ , where $(\beta,b)\in S_m$ and $a\cdot\vars^{\alpha}$ denotes the leading term of some $f\in F$. Then $b=a\cdot c$ mod $m$, in other words $m\divz ac-b$. Now let $\tilde{a}:=\ggt{a,m}$ be the maximum portion of $a$ that divides $m$, that is, $a=\tilde{a}\cdot u$, where $\ggt{u,m}=1$ which is equivalent to $u$ being a unit in $\Zmod{m}$. Since $\tilde{a}\divz m\divz ac-b$, we obtain $\tilde{a}\divz b$.\\
We want to show $\tilde{a}=b$, so for a contradiction let us assume $\tilde{a}<b$. $f\in F\subset I_0$ implies $m\divz a\alpha!$ by Lemma 3.2, hence $m\divz \tilde{a}\alpha!=\tilde{a}\beta!$, as the factors in $a/\tilde{a}$ do not affect divisibility by $m$ and since obviously $\alpha=\beta$. But this means that we could replace $b$ by the smaller $\tilde{a}$ and still preserve the condition $m\divz \tilde{a}\beta!$. This contradicts the minimality of $b$ in $(\beta,b)\in S_m$. Hence $\tilde{a}=b$.\\
We thus arrive at the claimed relation $u\cdot b\vars^{\beta}=a\vars^{\alpha}$, and $c$ can be replaced by the unit $u^{-1}\in(\Zmod{m})^*$.\\
We have shown that we can relate the leading terms of any minimal strong Gr\"{o}bner basis $F$ of $I_0\subset\Zmod{m}[x_1,x_2,\ldots,x_n]$ to the leading terms in $G_m$ by units. By transitivity, we can now clearly also relate the leading terms of any two minimal strong Gr\"{o}bner bases by units. This concludes the proof.
\end{proof}
Note that an arbitrary factor $c$, relating two leading terms, need not necessarily be a unit. For example, consider the polynomial $f(x,y)=3(x-1)(x-2)\cdot$\linebreak $(y-1)(y-2)\in G_{12}$. We may switch to another minimal strong Gr\"{o}bner basis of $I_0\subset\Zmod{12}[x,y]$, simply by replacing $f(x,y)$ by $f'(x,y)=9(x-1)(x-2)\cdot$\linebreak $(y-1)(y-2)$. Note that over $\Zmod{12}$ the ideals $\ideal{f}$ and $\ideal{f'}$ are identical. Thus, $G_m\setminus\{f\}\cup\{f'\}$ must still be a minimal strong Gr\"{o}bner basis. Now obviously $\LT{f'}=3\cdot\LT{f}$, but $3$ is not a unit in $\Zmod{12}$.\\[1ex]
We point out that minimal strong Gr\"{o}bner bases are in general not unique. This is due to the fact that we only consider leading terms and do not require tail reduction here. For example, in the case of the ideal $I_0$, we can easily modify the basis $G_m$ and still obtain a minimal strong Gr\"{o}bner basis. To this end, we may pick two elements $f,g\in G_m$ with $\LM{g}<\LM{f}$ and replace $f$ by $f+g$.\\[1ex]
Let us once again take a look at the complexity of $G_m$, that is, the size $|G_m|$ as a function of the number of variables $n$. The discussion that followed Example 3.4 already made clear that $|G_m|$ is only linear in $n$, when all prime factors of $m$ are mutually distinct. In the general case when $m=q_1^{e_1}\cdot q_2^{e_2}\cdots q_k^{e_k}$ with some $e_j>1$, the construction is combinatorially more complex. However, based on the following investigation for the practically relevant case $m=q^k$, we conjecture that for fixed $m$ the size of $G_m$ is always of polynomial order in $n$.\\
Since we are interested in the asymptotic behaviour of $|G_m|$ for large $n$, we may assume that $n$ is much larger than $m=q^k$. We can decompose $G_m$ into the disjoint union
\begin{align*}
G_m = & \bigcup_{0\leq j<k} G_m^{(j)},\:\:\text{where}\\
G_m^{(j)} := & \{q^j\cdot(x_i-1)\cdots(x_i-(k-j)q)\:~|~\:1\leq i\leq n\}\\
\cup & \{q^j\cdot(x_{i_1}-1)\cdots(x_{i_1}-s_1q)(x_{i_2}-1)\cdots(x_{i_2}-s_2q)\:~|\\
& \:\:\:\:1\leq i_1,i_2\leq n;i_1\neq i_2;1\leq s_1,s_2;s_1+s_2=k-j\}\\
& \cdots\\
\cup & \{q^j\cdot(x_{i_1}-1)\cdots(x_{i_1}-q)(x_{i_2}-1)\cdots(x_{i_2}-q)\cdots\\
& \:\:\:\:(x_{i_{k-j}}-1)\cdots(x_{i_{k-j}}-q)   \:~|~\:1\leq i_u\leq n;i_u\neq i_v\:\text{for}\:u\neq v\},\\
\end{align*}
that is, in $G_m^{(j)}$ we have the constant coefficient $q^j$, and we have polynomials in $1$ up to $k-j$ variables. With $h_j:=|G_m^{(j)}|$, we obtain the very rough estimates
\begin{align*}
h_j & \leq n+\binom{n}{2}\cdot k^1+\cdots+\binom{n}{k-j}\cdot k^{k-j-1}= \sum_{l=1}^{k-j}\binom{n}{l}\cdot k^{l-1}\leq \binom{n}{k}\cdot k^k,\\
h_j & \geq \binom{n}{k-j}.
\end{align*}
For $h:=|G_m|=\sum_{0\leq j<k}h_j$ we thus get
\[ \binom{n}{k}\leq \sum_{j=0}^{k-1}\binom{n}{k-j}\leq h\leq k\cdot\binom{n}{k}\cdot k^k= \binom{n}{k}\cdot k^{k+1},\]
and $h=|G_m|$ is of polynomial order of degree $k$ in the number of variables $n$.

\subsection{Computing the Reduced Normal Form\\
of a Polynomial}
After we have given a minimal strong Gr\"{o}bner basis of $I_0\subset\Zmod{m}[x_1,x_2,\ldots,x_n]$, we shall now turn to computing representatives of the residue classes in\linebreak $\left(\Zmod{m}[x_1,x_2,\ldots,x_n]\right)/I_0$. When we impose certain bounds on the coefficients of all monomials, these representatives are unique:
\begin{prop}\label{prop_unique_repr}
Every residue class~$\bar{f}\in\left(\Zmod{m}[x_1,x_2,\ldots,x_n]\right)/I_0$ has a\linebreak unique representative~$f\in\Zmod{m}[x_1,x_2,\ldots,x_n]$ of the form
\[f = \sum_{\alpha\in\{0,1,\ldots,m-1\}^n}a_\alpha\monom^\alpha,\:\:\text{where}\:\:0\leq a_{\alpha} < \frac{m}{\ggt{m,\alpha!}},\:\text{for all}\:\alpha.\]
\end{prop}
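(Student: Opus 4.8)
The plan is to prove existence and uniqueness separately, using the Gröbner basis $G_m$ from Theorem~3.3 as the essential tool. For \textbf{existence}, I would start from an arbitrary representative $g \in \Zmod{m}[x_1,\ldots,x_n]$ of the residue class $\bar f$ and reduce it modulo $G_m$. Since $G_m$ is a strong Gröbner basis, whenever the leading term $b\vars^\beta$ of the current polynomial is divisible by the leading term of some $p_{\gamma,c} \in G_m$, we can subtract an appropriate term multiple of $p_{\gamma,c}$ to cancel that leading term; this process terminates because $<$ is a well-order. The result is a polynomial $f$ with $f \equiv g \pmod{I_0}$ whose leading term — and by iterating the argument on the tail, \emph{every} term $a_\alpha \vars^\alpha$ — is not divisible by any $\LT{p_{\gamma,c}}$. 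I then need to translate this ``irreducibility'' condition into the coefficient bound $0 \le a_\alpha < m/\ggt{m,\alpha!}$. The key observation is that, by Lemma~3.1, for each $\alpha$ the polynomial $p_{\alpha,\,m/\ggt{m,\alpha!}}$ lies in $I_0$ (since $\ggt{m,\alpha!}$ divides $\alpha!$, so $m \divz \frac{m}{\ggt{m,\alpha!}}\alpha!$), and its leading monomial is exactly $\vars^\alpha$; hence $\frac{m}{\ggt{m,\alpha!}} \vars^\alpha \in \LI{I_0} = \LI{G_m}$, so some $\LT{p_{\gamma,c}}$ divides it. If a coefficient $a_\alpha$ were $\ge m/\ggt{m,\alpha!}$, one checks that $a_\alpha \vars^\alpha$ would then be divisible by that same $\LT{p_{\gamma,c}}$ (the monomial part $\gamma \preceq \alpha$ is immediate; the coefficient divisibility $c \divm a_\alpha$ follows because $c \divm m/\ggt{m,\alpha!}$ and $m/\ggt{m,\alpha!} \divz a_\alpha$), contradicting irreducibility. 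Range-reducing each coefficient modulo $m/\ggt{m,\alpha!}$ (which changes $f$ by an element of $I_0$, again by Lemma~3.1) puts $f$ into the asserted normal form.

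For \textbf{uniqueness}, suppose $f$ and $f'$ are both of the prescribed form and $f \equiv f' \pmod{I_0}$, so $h := f - f' \in I_0$. If $h \neq 0$, write $\LT{h} = b\vars^\beta$; the coefficient $b$ is a difference of two residues each in $[0, m/\ggt{m,\beta!})$, but more importantly $b \not\equiv 0$. By Lemma~3.2, $m \divz b\,\beta!$. Writing $\tilde b := \ggt{b,m}$ (the portion of $b$ dividing $m$), we get $m \divz \tilde b\,\beta!$, and since $\ggt{m,\beta!}$ is by definition the largest divisor of $m$ that divides $\beta!$ — wait, more precisely: $m \divz \tilde b\,\beta!$ forces $m/\ggt{m,\beta!} \divz \tilde b$, hence $m/\ggt{m,\beta!} \le \tilde b \le b$. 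But the $\vars^\beta$-coefficients of $f$ and $f'$ both lie in $[0,\, m/\ggt{m,\beta!})$, so their difference $b$ (taken as an integer in $(-m/\ggt{m,\beta!},\, m/\ggt{m,\beta!})$) has absolute value strictly less than $m/\ggt{m,\beta!}$, and a nonzero such $b$ cannot be divisible (in $\Z$) by $m/\ggt{m,\beta!}$ — contradiction. Therefore $h = 0$ and $f = f'$.

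I would present this cleanly by first isolating as a small lemma the fact that, for every $\alpha$, the residue $m/\ggt{m,\alpha!}$ generates the ``vanishing ideal in the $\vars^\alpha$-direction'': concretely, $c \vars^\alpha \in I_0 \iff \frac{m}{\ggt{m,\alpha!}} \divz c$ (one direction from Lemma~3.1, the other from Lemma~3.2 applied to $c\vars^\alpha$ itself). With this lemma, both existence and uniqueness become short: existence is ``reduce and range-reduce,'' uniqueness is ``a nonzero difference would have a leading coefficient that is simultaneously a nonzero integer of absolute value $< m/\ggt{m,\beta!}$ and divisible by $m/\ggt{m,\beta!}$.''

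The main obstacle I anticipate is \emph{bookkeeping the two notions of divisibility}, $\divz$ versus $\divm$, correctly in the existence argument — specifically, verifying that an irreducible-modulo-$G_m$ polynomial actually has all coefficients in the stated range, rather than merely that they cannot be \emph{further} reduced by the Gröbner step. The cleanest route is the one above: do not try to read the bound off the Gröbner reduction directly, but instead reduce modulo $G_m$ to kill top-divisibility, then separately invoke the isolated lemma to range-reduce each coefficient into $[0, m/\ggt{m,\alpha!})$ by subtracting a suitable multiple of $p_{\alpha,\,m/\ggt{m,\alpha!}} \in I_0$. Uniqueness, by contrast, is essentially immediate once Lemma~3.2 is in hand and should cause no trouble.
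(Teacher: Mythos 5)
Your final recommended route is, in substance, exactly the paper's proof: for existence, divide each offending coefficient $a_\alpha$ with remainder by $c:=\frac{m}{\ggt{m,\alpha!}}$ and subtract $k\cdot p_{\alpha,c}\in I_0$ (Lemma 3.1), iterating until all bounds hold; for uniqueness, apply Lemma 3.2 to the leading term $b\vars^\beta$ of the difference and play $\frac{m}{\ggt{m,\beta!}}\divz b$ off against $0<|b|<\frac{m}{\ggt{m,\beta!}}$. Both halves are correct as you finally state them. Two remarks: the paper does \emph{not} perform any preliminary reduction modulo $G_m$ — the range-reduction alone is the whole existence proof — and the one point you gloss over is termination: subtracting $k\cdot p_{\alpha,c}$ introduces new lower-order terms, so the process must be iterated and its termination justified by the well-order, as the paper does explicitly.

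However, the first version of your existence argument contains a genuinely false step, and your instinct that this was "the main obstacle" was right: a polynomial that is irreducible with respect to $G_m$ need \emph{not} satisfy the coefficient bounds. The broken inference is "$a_\alpha\geq \frac{m}{\ggt{m,\alpha!}}$ implies $\frac{m}{\ggt{m,\alpha!}}\divz a_\alpha$" — being at least $c$ is not being a multiple of $c$. Concretely, take $m=4$, $n=1$: then $G_4=\{(x-1)(x-2)(x-3)(x-4),\ 2(x-1)(x-2)\}$ with leading terms $x^4$ and $2x^2$, and $3x^2$ is irreducible (since $2\ndivm 3$ in $\Zmod{4}$) yet violates the bound $a_{(2)}<2$; indeed $3x^2$ and its canonical representative $x^2+2x$ are two distinct $G_4$-irreducible polynomials in the same class, so reduction modulo $G_m$ does not produce the normal form of the Proposition. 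Relatedly, your isolated lemma is misstated: $c\vars^\alpha\in I_0$ forces $c=0$ (evaluate at $(1,\ldots,1)$); what is true, and what you actually use, is that $c\vars^\alpha$ is the \emph{leading term of some element} of $I_0$ (namely of $p_{\alpha,c}$) if and only if $\frac{m}{\ggt{m,\alpha!}}\divz c$. With the lemma restated that way and the Gröbner pre-reduction deleted, your proof coincides with the paper's.
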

Note that, whenever $m\divz\alpha!$, the given bound forces $a_{\alpha}$ to be zero.
\begin{proof}
Let $f\in\Zmod{m}[x_1,x_2,\ldots,x_n]$ be an arbitrary polynomial. Suppose $f$ containes a monomial $a\vars^{\alpha}$ for which $a\geq c:=\frac{m}{\ggt{m,\alpha!}}$.
Due to division with remainder of $a$ by $c$ in $\Z$, we obtain $a=k\cdot c + r$ for some $k\in\{1, 2,\ldots\}$, and $0\leq r < c$. Now, $m\divz\frac{m\alpha!}{\ggt{m,\alpha!}}$. In other words, $m\divz c\alpha!$, and $p_{\alpha,c}\in I_0$ by Lemma 3.1.\\
As a consequence, $f$ and $f':=f-k\cdot p_{\alpha,c}$ lie in the same residue class. Moreover, the coefficient of $\vars^{\alpha}$ in $f'$ is $a-k\cdot c=r$, for which the claimed bound holds. Since we have a global order on the monomials, we need only finitely many repetitions of the presented reduction step, in order to arrive at a polynomial $g$ which also lies in the residue class of $f$, and the coefficients of which all satisfy the required bound condition.\\
For proving uniqueness of the constructed representative, assume we have two representatives $f_1,f_2$ of the residue class of $f$, realising all coefficient bounds. Then, by defining either $g:=f_1 -f_2$ or $g:=f_2 -f_1$, we obtain a polynomial $g\in I_0$ with $\LT{g}=a\vars^{\alpha}$ and $0\leq a < \frac{m}{\ggt{m,\alpha!}}$. By Lemma 3.2, we know that $m\divz a\alpha!$.\\
We need to show that $a=0$; so for a contradiction, let us assume that $a>0$. With $b:=\ggt{m,a}$ we still have $m\divz b\alpha!$, \ie, $\frac{m}{b}\divz\alpha!$. Then also $\frac{m}{b}\divz\ggt{m,\alpha!}$ which implies $m\divz b\cdot\ggt{m,\alpha!}$. But $b\cdot\ggt{m,\alpha!}\leq a\cdot\ggt{m,\alpha!}<m$, yielding the desired contradiction.
\end{proof}
As an immediate consequence, we can count the number of polynomial functions which is the same as the number of residue classes in $\left(\Zmod{m}[x_1,x_2,\ldots,x_n]\right)/I_0$:
\begin{cor}
The number of polynomial functions~\(\left(\Zmod{m}\right)^n\to\Zmod{m}\) is given by
\[N=\prod_{\alpha\in\{0,1,\ldots,m-1\}^n}\frac{m}{\ggt{m,\alpha!}}.\]
\end{cor}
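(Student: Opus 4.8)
The plan is to convert the count of polynomial functions into a count of admissible coefficient tuples by chaining together two elementary bijections. First I would observe that, directly from the definitions, the evaluation map $f\mapsto\polyfct{f}$ is a surjective ring homomorphism from $\Zmod{m}[x_1,x_2,\ldots,x_n]$ onto the ring of polynomial functions $\left(\Zmod{m}\right)^n\to\Zmod{m}$ (with pointwise operations), and its kernel is exactly $I_0$, by the very definition of $I_0$ as the ideal of vanishing polynomials. Consequently it descends to a bijection between the residue classes in $\left(\Zmod{m}[x_1,x_2,\ldots,x_n]\right)/I_0$ and the polynomial functions, so that $N=\card{\left(\Zmod{m}[x_1,x_2,\ldots,x_n]\right)/I_0}$.

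Next I would invoke Proposition~\ref{prop_unique_repr}: it assigns to each residue class $\bar f$ a unique representative of the form $\sum_{\alpha\in\{0,1,\ldots,m-1\}^n}a_\alpha\monom^\alpha$ with $0\leq a_\alpha<\frac{m}{\ggt{m,\alpha!}}$ for every $\alpha$. This is precisely a bijection between residue classes and tuples $(a_\alpha)_{\alpha\in\{0,1,\ldots,m-1\}^n}$ indexed by the finite set of monomial exponents, in which each coordinate $a_\alpha$ independently ranges over the $\frac{m}{\ggt{m,\alpha!}}$ integers $0,1,\ldots,\frac{m}{\ggt{m,\alpha!}}-1$ (this number being $1$, hence forcing $a_\alpha=0$, exactly when $m\divz\alpha!$). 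Since the choices in distinct coordinates are independent, the number of such tuples is the finite product $\prod_{\alpha\in\{0,1,\ldots,m-1\}^n}\frac{m}{\ggt{m,\alpha!}}$, and combining this with the first step yields the claimed formula for $N$.

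I do not anticipate a genuine obstacle here: the corollary is essentially a bookkeeping consequence of Proposition~\ref{prop_unique_repr}, which already carried out the substantive work. The only point that warrants an explicit sentence is the identification of the number of polynomial functions with $\card{\left(\Zmod{m}[x_1,x_2,\ldots,x_n]\right)/I_0}$, i.e. the verification that the kernel of $f\mapsto\polyfct{f}$ coincides with $I_0$; but this is immediate from the definition of a vanishing polynomial together with the fact that evaluation at a fixed point is a ring homomorphism. Everything after that is finite counting.
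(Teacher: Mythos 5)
Your proposal is correct and matches the paper's intent exactly: the corollary is stated there as an immediate consequence of Proposition~\ref{prop_unique_repr}, identifying polynomial functions with residue classes modulo $\vanideal$ and then counting the unique representatives coefficient by coefficient. Nothing is missing.
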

In comparison, the number of all functions~\(\left(\Zmod{m}\right)^n\to\Zmod{m}\) equals
\[m^{\left(m^n\right)}=\prod_{\alpha\in\{0,1,\ldots,m-1\}^n}m=N\cdot\prod_{\alpha\in\{0,1,\ldots,m-1\}^n}\ggt{m,\alpha!}.\]
\begin{table}[h]
\centerline{\begin{tabular}{|c|c|c|}
  \hline
  $\Zmod{m}\longrightarrow\Zmod{m}$& No. of functions & No. of polynomial functions\\\hline
  $m = 2^2$ & $256$ & $64$\\
  $m = 2^8$ & $10^{616}$ & $10^{16}$\\
  $m = 2^{16}$ & $10^{315652}$ & $10^{52}$\\
  $m = 2^{32}$ & $10^{41373247567}$ & $10^{184}$\\
  \hline
\end{tabular}}
\end{table}
Hence, if~$m$ is not prime, there are much fewer polynomial functions\linebreak $(\Zmod{m})^n\rightarrow\Zmod{m}$ than functions. This has the consequence that not every problem which can be modelled by functions, like problems coming from formal verification, can be modelled by polynomials over $\Zmod{m}$ (cf.~\cite{datacorr} where, nevertheless, polynomial ideals over $\Zmod{2^k}$ have been used successfully).\\[1ex]
Following the idea in the proof of Proposition 3.6, we are able to present a very fast algorithm for computing the reduced normal form, that is, the unique representative of a residue class in the ring $\Zmod{m}[x_1,x_2,\ldots,x_n]$ module $I_0$. (see \cite{equi_check} for $\Zmod{2^k}$):

\begin{algorithm}
\caption{Reduced normal form in $\Zmod{m}[x_1,x_2,\ldots,x_n]$ with respect to $\vanideal$}
\label{NFzero}
\begin{algorithmic}
\REQUIRE $f\in \Zmod{m}[x_1,x_2,\ldots,x_n]$ a polynomial, $>$ any monomial order on $\Zmod{m}[x_1,x_2,\ldots,x_n]$
\ENSURE $h$ the reduced normal form of $f$ with respect to \vanideal
\STATE $h := 0$
\WHILE{$f\neq 0$}
  \STATE $a\monom^\alpha := \LT{f}$
  \STATE $c:=\frac{m}{\ggt{m, \alpha!}}$
  \STATE solve $a = k\cdot c+r$ with $k\in\N$ and $0\leq r < c$
  \STATE $h := h + r\monom^\alpha$
  \STATE $f := f - k\cdot p_{\alpha, c} - r\monom^\alpha$
\ENDWHILE
\RETURN $h$
\end{algorithmic}
\end{algorithm}
Note that the algorithm makes sure that $f+h$ will always represent the same residue class, as $p_{\alpha,c}\in I_0$. Since initially $h=0$, this class must be the residue class of $f$. After termination, which is ensured by the global order, $h$ consists only of terms with appropriate coefficient bound, \ie, $h$ must be the unique representative as given in Proposition 3.6.

\subsection{Computing Minimal Strong Gr\"{o}bner Bases\\
over Different Rings $\Zmod{m}$}
The simple structure of minimal strong Gr\"{o}bner bases provides us with a recursive means to construct $G_m$ from bases for smaller $m$. We are especially interested in computing $G_M$ from the elements of the already computed set $G_m, m>1$, where $M = q\cdot m$ with $q$ a prime number. Let $q$ be the maximal prime number dividing $M$, then the following (not necessarily pairwise disjoint) decomposition is being used in the below Algorithm {\it RecComp} to compute $G_M$:
\begin{prop}\label{prop_decomp}
Suppose $M=q\cdot m,\;m > 1$, where $q$ is the largest prime factor of $M$. Then, with the above notations, $G_M$ can be decomposed as follows.
\begin{align*}
G_M & = H_1\;\cup\;H_2\;\cup\;H_3,\;\;\mbox{where}\\
H_1 & := \{ p_{\alpha,a} ~|~ p_{\alpha,a}\in G_m, (\alpha,a)\in S_M \},\\
H_2 & := \{ p_{\alpha,aq} ~|~ p_{\alpha,a}\in G_m, (\alpha,aq)\in S_M \},\;\;\mbox{and}\\
H_3 & := \{ p_{\alpha+\beta,b} ~|~ \exists\:p_{\alpha,a}\in G_m,\:\exists\:\beta\in B(\alpha,a,q)\:\:\exists\: b\divz M\: :\:\:(\alpha+\beta,b)\in S_M \},
\end{align*}
with $B(\alpha,a,q)$ denoting the set of all $\beta\succ (0,0,\ldots,0)$ such that $(\alpha+\beta)!$ contains exactly one more prime factor $q$ than $a\alpha!$.
\end{prop}
This decomposition says that we may already directly find elements of $G_M$ in $G_m$. Or, secondly, we may build an element of $G_M$ by multiplying an element of $G_m$ by $q$. Besides altering the coefficient only, we can also try to enlarge the exponent vector of some $p_{\alpha,a}\in G_m$ such that the new exponent factorial $(\alpha+\beta)!$ contains one more prime factor $q$ than $a\alpha!$. However, enlarging the exponent may introduce many more divisors of $M$, so that in general we need to adjust the coefficient. It is easy to see that once a suitable $\beta$ is found, we can set $b=\frac{M}{\ggt{M,(\alpha+\beta)!}}$. The search for suitable $\beta$ can obviously be limited to the set defined by the condition $\beta\preceq (q,q,\ldots,q)$, that is, we know a finite superset of $B(\alpha,a,q)$.
\begin{proof}
It is immediately clear that $G_M\supset H_1\cup H_2\cup H_3$, since each of the definitions of the $H_i$ enforces that the respective index belongs to $S_M$.\\
For the other inclusion we choose an arbitrary $p_{\alpha,a}\in G_M$ and are going to show that this polynomial lies in at least one of $H_1,H_2$ and $H_3$.\\
If $p_{\alpha,a}\in G_M$ also belongs to $G_m$, then $p_{\alpha,a}\in H_1$. Thus in the following, let us assume that $p_{\alpha,a}\in G_M\setminus G_m$.\\
Suppose $q\divz a$, then $a=q\cdot a'$ for some $1\leq a'<m$ dividing $m$. We obtain the membership $p_{\alpha,a'}\in G_m$, i.e.~$p_{\alpha,a}=p_{\alpha,qa'}\in H_2$.\\
The remaining case is $q\not\divz a$; hence $q\divz\alpha_1!\cdots\alpha_n!$, with $n$ denoting the number of variables as usual. Without loss of generality $q$ then divides $\alpha_1!$. This means that there is some maximal $k\geq 1$ satisfying $k\cdot q\leq\alpha_1$. We then define the following objects:\\
$\begin{array}{llcl}
\quad\quad\quad\quad & \beta_1 & :=      & k\cdot q - 1,\\
\quad\quad\quad\quad & \beta   & :=      & (\beta_1,\alpha_2,\ldots,\alpha_n),\\
\quad\quad\quad\quad & b       & :=      & \frac{m}{gcd(m, \beta!)},\\
\quad\quad\quad\quad & \gamma  & \preceq & \beta\;\;\mbox{minimal, such that}\;gcd(m,\gamma!)=gcd(m,\beta!),
\end{array}$\\
where there may be numerous choices for $\gamma$.\\
Note first that $m=\frac{m}{gcd(m,\beta!)}\cdot gcd(m,\beta!)=b\cdot gcd(m,\gamma!)\;\divz\;b\cdot\gamma!$. The above construction includes the case $b=m$, and we shall consider it below. If $b<m$, then - due to the construction - both $b$ and $\gamma$ are minimal in the sense of the definition of $S_m$, thus $(\gamma,b)\in S_m$. But since $\gamma\preceq\beta\prec\alpha$, we have constructed some $p_{\gamma,b}\in G_m$ that gives rise to $p_{\alpha,a}$ by means of strictly enlarging the multi-exponent. Hence $p_{\alpha,a}\in H_3$, unless $b=m$.\\
Last, we consider the setting $b=m$. By the definition of $b$, this implies that $\beta!$ and $m$ are coprime. Then $k=1$, for if $k\geq 2$ then $\beta_1!$ would mention at least all prime numbers up to $q$ including $q$. But $q$ is the maximal of all prime factors of $M$ and hence greater than or equal to any prime factor of $m$. So this would contradict the afore mentioned coprimality, and we conclude that $k=1$.\\
Therefore $\beta_1!=1\cdot 2\cdots (q-1)$ and $m$ are coprime, implying that $m$ and $M$ must be powers of $q$. We have $q\not\divz a$ which means that in $a\alpha!$ all prime factors $q$ must come from $\alpha$ (and $a=1$). Then, by the minimality of $\alpha$, $\alpha=(k_1\cdot q,k_2\cdot q,\ldots,k_n\cdot q)$, for some $k_i\in\N_0$. And since $m>1$, either one of the $k_i$ is at least $2$, or there are at least two non-zero $k_i$'s. In either situation, we can immediately define some $\beta\prec\alpha$ which is not the zero multi-index, such that $\beta!$ has exactly one factor $q$ less than $\alpha!$. Then $(\beta,1)\in S_m$, and again $p_{\alpha,a}\in H_3$.
\end{proof}
The following examples are numbered according to the order in the above decomposition, that is, 1.~gives an example for the set $H_1$ and so on. (The number of variables, $n$, equals $2$.)
\begin{example}\hspace*{1ex}\\
\vspace*{-3ex}
\begin{enumerate}
\item $G_3\subset G_6$, since $3!=6$ already contains all necessary factors; see Example 3.4 (and the remark regarding $k = 1$) to recall the elements of $G_3$.
\item With $q$ any prime, we have $p_{(3,0),2}\in G_{12}$ and $p_{(3,0),2\cdot q}\in G_{12\cdot q}$.
\item We have $6(x-1)(x-2)(y-1)(y-2)\in G_{24}$. We try to construct an element in $G_{24\cdot 3}$ by enlarging the product of $x$ and $y$ terms. Since\linebreak $6\cdot 2!\cdot 2!$ contains one prime factor $3$, we try to move to the target product $(x-1)(x-2)(x-3)(y-1)(y-2)(y-3)$ which realizes one more factor $3$ because $3^2\divz 3!\cdot 3!$. Now $b=\frac{72}{\ggt{72,3!\cdot 3!}}=2$ and hence $2(x-1)(x-2)\cdot$\linebreak $(x-3)(y-1)(y-2)(y-3)\in G_{72}$.
\end{enumerate}
\end{example}
Note that in Proposition \ref{prop_decomp}, we need $q$ to be the {\it largest} and not just any prime factor of $M$. For a counterexample, consider $m=3,\;q=2$, hence $M=6$, and the univariate case, i.e.~$n=1$. Then one easily checks that $H_1=G_3, H_2=H_3=\emptyset$ but $H_1\cup H_2\cup H_3=G_3\neq G_6$.\\[1ex]
The above decomposition of $G_M$, and the structure of $G_q$ for a prime $q$ as discussed in Example 3.4, give rise to the following algorithm.\pagebreak

\begin{algorithm}[h]\label{algo_recursive}
\caption{RecComp($M$), Recursive computation of $G_M$}
\label{RecursGm}
\begin{algorithmic}
\REQUIRE $M\in\{2,3,\ldots\}$
\ENSURE $G_M$
\STATE $q :=$ the largest prime factor of $M$
\IF{$M=q$}
  \STATE $A: =\{ q\cdot e_i ~|~ 1\leq i\leq n\}$, where the $e_i$ are the unit vectors in $\N^n$
  \STATE $G := \{ p_{\alpha,1} ~|~ \alpha\in A \}$
\ELSE
  \STATE $m:=M/q$
  \STATE $H:=$RecComp($m$)
  \STATE $G:=\{\:\}$
  \FORALL{$p_{\alpha,a}\in H$}
    \IF{$(\alpha,a)\in S_M$}
      \STATE $G:=G\cup\{p_{\alpha,a}\}$
    \ELSE
      \STATE $G:=G\cup\{p_{\alpha,a\cdot q}\}$
      \FORALL{$\beta\in B(\alpha,a,q)\subset\{\beta~|~(0,0,\ldots,0)\prec\beta\preceq (q,q,\ldots,q)\}$}
        \STATE $b:=\frac{M}{\ggt{M,(\alpha+\beta)!}}$
        \STATE $G:=G\cup\{p_{\alpha+\beta,b}\}$
      \ENDFOR      
    \ENDIF
  \ENDFOR
\ENDIF
\RETURN $G$
\end{algorithmic}
\end{algorithm}

\bibliographystyle{plain}
\bibliography{groebner}

\end{document}